\documentclass[11pt]{amsart}

\usepackage[english]{babel}
\usepackage{amssymb}

\usepackage{amsmath}
\usepackage{enumitem}
\usepackage{colonequals} 
\usepackage{comment}
\usepackage{graphicx}
\usepackage{xcolor}
\usepackage[colorlinks=true, allcolors=darkgray!95]{hyperref}

\newtheorem{thm}{Theorem}
\newtheorem{lem}[thm]{Lemma}
\newtheorem{prop}[thm]{Proposition}

\theoremstyle{remark}
\theoremstyle{definition}

\newtheorem{ex}[thm]{Example}
\newtheorem{rem}[thm]{Remark}

\newcommand{\N}{\mathbb{N}}
\newcommand{\Z}{\mathbb{Z}}
\newcommand{\F}{\mathbb{F}}
\renewcommand{\P}{\mathrm{P}}

\title[Hilbert's basis theorem for Poisson Ore extensions]{Hilbert's basis theorem for\\ Poisson Ore extensions}

\date{\today}

\author{Per B\"{a}ck}
\address[Per B\"{a}ck]{Department of Business and Mathematics, M\"{a}lardalen  University, SE-721 23  V\"{a}ster{\aa}s, Sweden}
\email{per.back@mdu.se}

\author{Patrik Lundstr\"{o}m}
\address[Patrik Lundstr\"{o}m]{Department of Engineering Science, University West, SE-461 86  Trollh\"{a}ttan, Sweden}
\email{patrik.lundstrom@hv.se}

\author{Johan \"{O}inert}
\address[Johan \"{O}inert]{Department of Mathematics and Natural Sciences, Blekinge Institute of Technology, SE-371 79 Karlskrona, Sweden and Department of Engineering, University of Sk\"{o}vde, SE-541 28 Sk\"{o}vde, Sweden}
\email{johan.oinert@bth.se}

\author{Johan Richter}
\address[Johan Richter]{Department of Mathematics and Natural Sciences, Blekinge Institute of Technology, SE-371 79 Karlskrona, Sweden}
\email{johan.richter@bth.se}

\begin{document}

\begin{abstract}
We prove an analogue of Hilbert's 
basis theorem for
Poisson Ore extensions and 
Poisson Laurent Ore extensions. 
We also obtain corresponding results for iterated Poisson Ore extensions and iterated Poisson Laurent Ore extensions associated to commuting Poisson-pairs.
Finally, we give examples of Poisson Ore extensions that are Poisson--Noetherian without being Noetherian as ordinary algebras.
\end{abstract}

\subjclass[2020]{Primary 17B63; Secondary 13E05, 16P40, 16S36}

\keywords{Poisson algebra, Poisson Ore extension, Poisson Laurent Ore extension, Poisson--Noetherian algebra, Hilbert's basis theorem, Poisson ideal}

\maketitle

\section{Introduction}

Poisson algebras provide an algebraic framework for structures arising in classical mechanics, deformation quantization, quantum groups, algebraic geometry, representation theory, and integrable systems \cite{LGPV2012}. 
Originating in the work of Poisson and later developed in both geometric and algebraic directions, the theory now forms a bridge between commutative algebra, Lie theory, and noncommutative geometry; see, e.g., \cite{BV1988,RS2023}
for more background on Poisson algebras.

Throughout, let $K$ be an associative
commutative unital ring. Recall that a \emph{Poisson algebra} is an associative commutative unital $K$-algebra $A$ equipped with a 
$K$-bilinear operation $\{\cdot,\cdot\}_A 
\colon A \times A \to A$, called a 
\emph{Poisson bracket},
defining a Lie algebra structure on $A$, that is,
\[
\{a,a \}_A = 0 \quad \mbox{and} \quad  
\{ a, \{ b,c \}_A \}_A = 
\{\{a,b \}_A, c \}_A +
\{b,\{ a, c \}_A \}_A,
\]
for all $a,b,c \in A$, and satisfying Leibniz's rule,
\[
\{a, b c \}_A = 
\{a, b\}_A c + b \{a, c\}_A,
\]
for all $a,b,c \in A$. By a \emph{derivation} of a $K$-algebra, we always mean a $K$-linear derivation. A derivation $\delta$ of $A$ is called a \emph{Poisson derivation} if
\[
\delta(\{a,b\}_A) = 
\{\delta(a),b\}_A + \{a,\delta(b)\}_A,
\]
for all $a,b \in A$. 
A Poisson derivation $\delta$ is said to be \emph{Hamiltonian} if there exists $a\in A$ such that 
$\delta(b) = \{ a,b \}_A$ for all $b \in A$.

Inspired by Ore's \cite{Ore1933} 
construction of noncommutative polynomial 
rings, now called \emph{Ore extensions}, 
Oh~\cite{Oh2006}
introduced 
Poisson polynomial rings,
now often referred to as 
\emph{Poisson Ore extensions},
in the following way. Let $A$
be a Poisson algebra, and let $A[x]$ be the ordinary polynomial algebra. Oh~\cite{Oh2006} proves the following result over fields; however, the same proof applies verbatim to Poisson algebras over arbitrary associative commutative unital rings. A Poisson bracket
$\{ \cdot , \cdot \}$ on
$A[x]$ with the property that it 
extends $\{ \cdot,\cdot \}_A$,
and satisfies for all $a \in A$,
$\{ a,x \} = 
\alpha(a)x + \delta(a),$
for some maps
$\alpha,\delta\colon A \to A$, exists
if and only if
$\alpha$ is a Poisson derivation
of $A$ and $\delta$ is a 
derivation of $A$ that satisfy, 
for all $a,b\in A$, 
\begin{equation}\label{eq:mixedidentity}
\delta(\{a,b\}_A) - 
\{\delta(a),b\}_A - 
\{a, \delta(b)\}_A 
= \delta(a)\alpha(b) -
\alpha(a)\delta(b).
\end{equation}
In that case, we say that 
$(\alpha,\delta)$ is
a \emph{Poisson-pair} on $A$.
Oh~\cite{Oh2006} also shows that such 
an extension is unique and that
\begin{equation}\label{eq:monomials}
\{ ax^i, bx^j \} 
=( \{a,b\}_A  +
j\alpha(a)b -
 ia\alpha(b)) x^{i+j} 
+ (j\delta(a)b -
ia\delta(b))x^{i+j-1}
\end{equation}
holds for all $a,b\in A$ and 
$i,j\in \N \colonequals \{ 0,1,2, \ldots \}$,
with the convention that the second summand is omitted when $i=j=0$.
The resulting structure is called a
\emph{Poisson Ore extension} and it
is denoted by $A[x;\alpha,\delta]_\P$.
This family of algebras includes many 
classical constructions; 
see e.g.~\cite[Section 2]{Oh2006}.

The Poisson bracket $\{\cdot,\cdot\}$ on $A[x;\alpha,\delta]_\P$ extends uniquely to a Poisson bracket $\{\cdot,\cdot\}$ 
on the Laurent polynomial algebra $A[x^{\pm}]$ (see Remark~\ref{rem:localise}). In this case, \eqref{eq:monomials} holds for all $a,b\in A$ and $i,j\in \mathbb Z$. We call the resulting Poisson algebra a \emph{Poisson Laurent Ore extension} 
and denote it by $A[x^{\pm};\alpha,\delta]_\P$.

A key result about polynomial rings, and a cornerstone of commutative algebra, is \emph{Hilbert’s basis theorem}, first established by Hilbert~\cite[Theorems I and II]{Hil1890}. It states that if a ring $R$ is Noetherian, then so is the polynomial ring $R[x]$; a first version for Ore extensions was established by Noether and Schmeidler~\cite[Satz III]{NS1920} (see e.g.~\cite[Theorem 2.6]{GW2004} for the general version).
Related work includes Varadarajan’s generalization
of Hilbert’s basis theorem for polynomial modules~\cite[Theorem A]{Var1982} and more recent nonassociative generalizations of Ore extensions~\cite{AB2025, BLOR2026, BR2023, BR2024}.
These results motivate the corresponding
question for Poisson Ore extensions and
Poisson Laurent Ore extensions. In this article, we prove a Hilbert's basis theorem for Poisson ideals
(see Theorem \ref{thm:main}). 

Let $A$ be a Poisson algebra. An ideal $I$ of $A$ is called a \emph{Poisson ideal} if $\{A,I\}_A\subseteq I$. Given $S\subseteq A$, a Poisson ideal $I$ is \emph{generated by $S$} if $I$ is the intersection of all Poisson ideals of $A$ containing $S$. We say that $A$ is \emph{Poisson--Noetherian} if every Poisson ideal of $A$ is finitely generated as a Poisson ideal; equivalently, if every ascending chain of Poisson ideals of $A$ stabilizes, or equivalently, if every nonempty collection of Poisson ideals of $A$ contains a maximal element with respect to inclusion.

\begin{thm}\label{thm:main}
Let $A$ be a Poisson algebra that is Poisson--Noetherian and 
possesses a Poisson-pair $(\alpha,\delta)$. Then $A[x ; \alpha,\delta]_\P$
and $A[x^{\pm} ; \alpha,\delta]_\P$
are Poisson--Noetherian.
\end{thm}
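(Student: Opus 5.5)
The plan is to adapt the classical proof of Hilbert's basis theorem via leading coefficients. I will use the ascending-chain / maximal-element formulation of Poisson--Noetherianity, so that I never need to describe Poisson ideals through generators. First I treat $B \colonequals A[x;\alpha,\delta]_\P$. For a Poisson ideal $J$ of $B$ and $n\in\N$, let $L_n(J)\subseteq A$ be the set consisting of $0$ together with all leading coefficients of elements of $J$ of degree exactly $n$.

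\textbf{Step 1: $L_n(J)$ is a Poisson ideal of $A$.} Closure under addition and under multiplication by elements of $A$ is the usual argument. For Poisson closure, take $f = ax^n + (\text{lower terms})\in J$ and $b\in A$. By \eqref{eq:monomials} with $i=0$, a term $cx^k$ contributes $\{b,cx^k\} = (\{b,c\}_A + k\alpha(b)c)x^k + k\delta(b)c\,x^{k-1}$, which has degree at most $k$. Hence
\[
\{b,f\} = (\{b,a\}_A + n\alpha(b)a)x^n + (\text{terms of degree} < n) \in J,
\]
so $\{b,a\}_A + n\alpha(b)a\in L_n(J)$. Since $L_n(J)$ is already an ideal and $a\in L_n(J)$, we get $n\alpha(b)a\in L_n(J)$, and therefore $\{b,a\}_A\in L_n(J)$. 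The $\delta$-term only lowers the degree and causes no trouble. I also note that $L_n(J)\subseteq L_{n+1}(J)$, because $xf\in J$, and that $J\subseteq J'$ implies $L_n(J)\subseteq L_n(J')$.

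\textbf{Step 2: comparison lemma.} If $J\subseteq J'$ are ideals of $B$ with $L_n(J)=L_n(J')$ for all $n$, then $J=J'$. This is the standard induction on degree: an element of $J'$ of minimal degree outside $J$ can have its leading term cancelled by an element of $J$ of the same degree, giving a contradiction. It uses only the ideal structure.

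\textbf{Step 3: the chain argument.} Let $J_1\subseteq J_2\subseteq\cdots$ be Poisson ideals of $B$. The family $\{L_n(J_k)\}_{n,k}$ is increasing in both indices. By the maximal-element form of Poisson--Noetherianity, choose a maximal member $L_N(J_K)$. Then $L_n(J_k)=L_N(J_K)$ for all $n\ge N$ and $k\ge K$. For each of the finitely many $n<N$, the chain $(L_n(J_k))_k$ stabilizes. Hence there is an $M$ with $L_n(J_k)=L_n(J_M)$ for all $n$ and all $k\ge M$, and Step 2 gives $J_k=J_M$. This proves that $B$ is Poisson--Noetherian.

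\textbf{Step 4: the Laurent case.} Since $B$ is a Poisson subalgebra of $C\colonequals A[x^{\pm};\alpha,\delta]_\P$ (the bracket on $C$ extends that of $B$), for any Poisson ideal $J$ of $C$ the intersection $J\cap B$ is a Poisson ideal of $B$. Because $x$ is a unit in $C$, every $f\in J$ satisfies $x^mf\in J\cap B$ for large $m$, so $J=(J\cap B)C$. Thus $J\mapsto J\cap B$ is injective and inclusion-preserving, and ascending chains in $C$ stabilize because their images in $B$ do.

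The only genuinely Poisson-specific point is Step 1, namely checking that leading coefficients are closed under the bracket. Via \eqref{eq:monomials}, this reduces to the observation that $\{A,-\}$ does not raise degree and shifts the leading coefficient only by the term $n\alpha(b)a$, which already lies in the ideal. I expect this to be the main (though mild) obstacle. Everything else is formal.
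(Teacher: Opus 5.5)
Your proof is correct, but it takes a different (equally classical) route from the paper's.

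The paper argues by contradiction, in the style of the minimal-degree proof of Hilbert's basis theorem. It assumes a Poisson ideal $L$ of $A[x;\alpha,\delta]_\P$ is not finitely generated and picks $f_i\in L\setminus L_i$ of minimal degree. Poisson--Noetherianity of $A$ then shows that the Poisson ideal $M$ of leading coefficients is generated by $m_0,\ldots,m_{N-1}$. Finally it proves that the leading coefficients of degree-$D$ elements of the single ideal $L_N$ form a Poisson ideal $P\supseteq M$, which contradicts the minimality of $\deg f_N$.

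You instead use the doubly indexed family $L_n(J_k)$, a maximal member of it, and the comparison lemma. This works purely with the maximal-element formulation. It never needs the notion of a Poisson ideal generated by a set, nor the small step that finitely many of the given generators $m_i$ already generate $M$.

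The Poisson-specific ingredient is the same in both proofs: the coefficient of $x^n$ in $\{b,f\}$ is $\{b,a\}_A+n\alpha(b)a$. The paper removes the extra term at the polynomial level via $q=\{a,h\}-D\alpha(a)h$. You remove it at the coefficient level, using that $L_n(J)$ is already an ideal.

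Your version gives a more structural by-product: every $L_n(J)$ is a Poisson ideal of $A$, and a chain of Poisson ideals of $B$ is controlled by these invariants. The paper's version is more economical, needing only one coefficient ideal at one degree.

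Your Laurent step is the paper's Lemma~\ref{lem:localization-Noetherian} specialized to $S=\{1,x,x^2,\ldots\}$.
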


\section{Hilbert's basis theorem}

In this section, we prove Theorem~\ref{thm:main}. 
We then extend the result to iterated Poisson Ore extensions 
and iterated Poisson Laurent Ore extensions associated to 
commuting Poisson-pairs (see Theorem~\ref{thm:Iter-Laurent}). 
We end this section with examples illustrating the distinction 
between Poisson--Noetherianity and ordinary Noetherianity (see Examples~\ref{ex:example-Poisson-simplicity}, \ref{ex:Poisson-Noetherian}, \ref{ex:PoissonNoetherianNotNoetherian}, and \ref{ex:IteratedNotNoetherian}).

\begin{rem}\label{rem:localise}
Let $C$ be a Poisson algebra, and let 
$S\subseteq C$ be a multiplicatively closed set of regular elements.
A direct verification shows that the Poisson bracket on $C$ extends to $S^{-1}C$ by the quotient rule
\begin{displaymath}
	\left\{\frac{f}{s}, \frac{g}{t} \right\}
	\colonequals
	\frac{st \{f,g\}_C - ft \{s,g\}_C - gs \{f,t\}_C + fg \{s,t\}_C }{s^2t^2}
\end{displaymath}
for $f,g\in C$ and $s,t\in S$.
It follows by 
Leibniz's rule and the identity
$\{s^{-1},g\} = -s^{-2} \{s,g\}_C$
that the extended bracket is unique.

Since $x$ is regular in the ordinary polynomial algebra $A[x]$,
applying this construction to $C\colonequals A[x;\alpha,\delta]_\P$
and $S\colonequals\{1,x,x^2,\ldots\}$
yields a Poisson bracket on the Laurent polynomial algebra $A[x^\pm]$.
The resulting Poisson algebra is called a \emph{Poisson Laurent Ore extension} and is denoted by
$A[x^\pm;\alpha,\delta]_\P$. Note that \eqref{eq:monomials} holds for all $a,b \in A$ and $i,j \in \Z$.
\end{rem}

\begin{lem}\label{lem:localization-Noetherian}
Let $C$ be a Poisson algebra, and let
$S\subseteq C$ be a multiplicatively closed set of regular
elements. If $C$ is Poisson--Noetherian, then so is $S^{-1}C$.
\end{lem}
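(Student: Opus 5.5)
The plan is to copy the standard proof that localization preserves Noetherianity, via contraction and extension of Poisson ideals. Write $\iota\colon C\to S^{-1}C$ for $c\mapsto c/1$; it is injective because $S$ consists of regular elements. By Remark~\ref{rem:localise}, $\iota$ is a Poisson homomorphism. For a Poisson ideal $J$ of $S^{-1}C$ I will use the contraction $J^c\colonequals \iota^{-1}(J)=\{f\in C : f/1\in J\}$. The aim is to show that $J\mapsto J^c$ is an inclusion-preserving injection from Poisson ideals of $S^{-1}C$ to Poisson ideals of $C$. Then any ascending chain $J_1\subseteq J_2\subseteq\cdots$ in $S^{-1}C$ contracts to an ascending chain in $C$. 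That chain stabilizes because $C$ is Poisson--Noetherian, and injectivity then forces the original chain to stabilize.

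First, $J^c$ is an ideal of $C$, since $\iota$ is a ring homomorphism. It is also a Poisson ideal: for $c\in C$ and $f\in J^c$ we have $\{c,f\}_C/1=\{c/1,f/1\}\in J$, because $J$ is a Poisson ideal. Inclusions are clearly preserved.

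For injectivity I will show $J=S^{-1}(J^c)\colonequals\{f/s : f\in J^c,\ s\in S\}$, which recovers $J$ from $J^c$. If $f/s\in J$, then $f/1=(s/1)(f/s)\in J$, so $f\in J^c$. Conversely, if $f\in J^c$, then $f/s=(1/s)(f/1)\in J$ since $J$ is an ideal. Both directions use only the ideal property.

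I do not expect a real obstacle. The only point needing care is that $\iota$ respects brackets, and that follows from the quotient-rule formula with $s=t=1$. So the argument reduces to the classical one.
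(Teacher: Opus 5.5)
Your proof is correct and matches the paper's argument: the paper also shows $I=(S^{-1}C)(I\cap C)$ for every ideal $I$ of $S^{-1}C$, contracts an ascending chain of Poisson ideals to $C$, lets it stabilize, and extends back. Your explicit check that the contraction is a Poisson ideal, via $\{c/1,f/1\}=\{c,f\}_C/1$, fills in a step the paper leaves implicit.
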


\begin{proof}
Let $I$ be an ideal of $S^{-1}C$. Then
$
I=(S^{-1}C)(I\cap C).$
Indeed, if $f/s\in I$, then $f/1=(s/1)(f/s)\in I$, so $f\in I\cap C$. Hence
$f/s=(1/s)(f/1)\in (S^{-1}C)(I\cap C)$, so $I\subseteq (S^{-1}C)(I\cap C)$. The reverse inclusion is immediate.

Let $I_1\subseteq I_2\subseteq I_3\subseteq\cdots$ be an ascending chain of Poisson ideals of $S^{-1}C$. Then $I_1\cap C\subseteq I_2\cap C\subseteq I_3\cap C\subseteq\cdots$ is an ascending chain of Poisson ideals of $C$. Suppose that $C$ is Poisson--Noetherian. Then there is a positive integer $k$ such that $I_k\cap C=I_n\cap C$ for every $n\geq k$. Therefore, $I_k=(S^{-1}C)(I_k\cap C)=(S^{-1}C)(I_n\cap C)=I_n$ for every $n\geq k$. Hence $S^{-1}C$ is
Poisson--Noetherian.
\end{proof}

\subsubsection*{Proof of 
Theorem~\ref{thm:main}.}
Let $(\alpha,\delta)$ be a 
Poisson-pair on a Poisson--Noetherian
Poisson algebra $A$.
We begin by showing that $B \colonequals A[x ; \alpha,\delta]_\P$
is Poisson--Noetherian. Seeking a contradiction, suppose that 
$L$ is a Poisson ideal of $B$ that is not finitely generated.

Define Poisson ideals 
$(L_i)_{i \in \N}$ of $B$ and polynomials
$(f_i)_{i \in \N}$ in $B$ 
recursively by first putting
$L_0 \colonequals \{ 0 \}$. 
Next, for each $i \in \N$,
take $f_i \in L \setminus L_i$ of minimal 
degree and let $L_{i+1}$ be the 
Poisson ideal of $B$ generated 
by the set $\{ f_0,\ldots,f_i \}$.

For each $i \in \N$, let $m_i \in A$
denote the leading coefficient of 
$f_i$. Let $M$ denote the Poisson ideal
of $A$ generated by $\{ m_i \}_{i \in \N}$.
Since $A$ is Poisson--Noetherian, 
there is $N \in \N_+$
such that $M$ is generated, as a Poisson 
ideal of $A$, by 
$\{ m_i \}_{i=0}^{N-1}$.

Put $D \colonequals \deg(f_N)$.
The sequence $(\deg(f_i))_{i \in \N}$ is non-decreasing, 
so for each $i \in \{0,\ldots,N-1 \}$, we may put
$g_i \colonequals 
x^{D-\deg(f_i)} f_i$.
Then, for each $i \in \{0,\ldots,N-1 \}$,
$g_i$ has leading coefficient $m_i$,
$\deg(g_i) = D$, and $g_i \in L_N$.

We claim that $L_N$ contains a polynomial
$g$ with leading coefficient $m_N$ and  
$\deg(g) = D$. 
Let us assume, for a moment,
that the claim holds. Since $f_N \in 
L \setminus L_N$ and $g \in L_N$, it follows
that $f_N - g \in L \setminus L_N$.
But $\deg(f_N - g) < D = \deg(f_N)$, which
contradicts the choice of $f_N$.

Now we show the claim. 
Let $P$ denote the set of all $m \in M$ such that
either $m = 0$ or $m$ is the leading coefficient
of a polynomial in $L_N$ of degree $D$.
Then $P$ is a $K$-submodule of $M$. We claim that
$P$ is a Poisson ideal of $A$.
First, $AP \subseteq P$. Indeed, take $a \in A$ and
$m \in P$. If $m=0$, then $am=0\in P$. Otherwise,
there is a polynomial $h\in L_N$ of degree $D$ with
leading coefficient $m$. Then $ah\in L_N$, and either
$am=0$ or $ah$ has degree $D$ and leading coefficient
$am$. Hence $am\in P$.

It remains to show that $\{A,P\}_A
\subseteq P$. Take
$a\in A$ and $m\in P$. If $m=0$, then $\{a,m\}_A=0\in P$.
Otherwise, choose $h\in L_N$ of degree $D$ with leading
coefficient $m$. Put
$q \colonequals \{a,h\} - D\alpha(a)h$.
Since $L_N$ is a Poisson ideal of $B$, we have $q\in L_N$.
Either $\{a,m\}_A = 0$, or $q$ has degree $D$ and, by \eqref{eq:monomials}, leading coefficient
$\{a,m\}_A$. 
In both cases, $\{a,m\}_A\in P$.

Thus $P$ is a Poisson ideal of $A$. Moreover,
$M$ is the Poisson ideal of $A$ generated by the set $\{m_0,\ldots,m_{N-1}\}\subseteq P$. Hence,
$M\subseteq P$. It follows that $m_N\in P$, which proves the claim.

Finally, $A[x^{\pm};\alpha,\delta]_\P$ is the localization
of $B$ at the multiplicative set $\{1,x,x^2,\ldots\}$. It is
therefore Poisson--Noetherian by
Lemma~\ref{lem:localization-Noetherian}.
\qed \\

We now extend Theorem~\ref{thm:main} to iterated Poisson
Ore extensions and iterated Poisson Laurent Ore extensions. We first record the following elementary
two-variable construction.

\begin{lem}\label{lem:extend-commuting-pair}
Let $A$ be a Poisson algebra, and let
$(\alpha,\delta)$ and $(\beta,\epsilon)$ be Poisson-pairs
on $A$. Suppose that each of the maps $\beta$ and $\epsilon$
commutes with each of the maps $\alpha$ and $\delta$.
Let $B \colonequals A[x;\alpha,\delta]_\P$.
Extend $\beta$ and $\epsilon$ to $B$ 
by setting
\[
\beta(ax^i)\colonequals \beta(a)x^i,
\qquad
\epsilon(ax^i)\colonequals \epsilon(a)x^i
\]
for all $a\in A$ and $i\in\N$, and then extend
$K$-linearly. Then $(\beta,\epsilon)$ is a Poisson-pair on
$B$. In particular, the Poisson Ore extension
$B[y;\beta,\epsilon]_\P$
is well defined and satisfies
$\{x,y\}=0$,
$\{a,y\}=\beta(a)y+\epsilon(a)$
for all $a\in A$.
\end{lem}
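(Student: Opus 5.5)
We check directly that the extended pair $(\beta,\epsilon)$ satisfies the three conditions defining a Poisson-pair on $B=A[x;\alpha,\delta]_\P$. Then we invoke Oh's existence and uniqueness result to get $B[y;\beta,\epsilon]_\P$. Since $\beta$ and $\epsilon$ are $K$-linear on $B$ by construction, every identity to be checked is $K$-bilinear (or linear) in its arguments. So it suffices to verify everything on monomials $ax^i$, $bx^j$ with $a,b\in A$ and $i,j\in\N$, using the explicit bracket formula \eqref{eq:monomials}.

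\textbf{Derivation property.} First I show that $\beta$ and $\epsilon$ are derivations of $B$. For $\phi\in\{\beta,\epsilon\}$ we have $\phi(ax^i\cdot bx^j)=\phi(ab)x^{i+j}=(\phi(a)b+a\phi(b))x^{i+j}$, which is the Leibniz rule on monomials. Equivalently, $\phi$ is the unique derivation of $A[x]$ extending $\phi$ with $\phi(x)=0$.

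\textbf{Compatibility with the bracket.} Next I compute, for $\phi\in\{\beta,\epsilon\}$,
\[
\phi(\{ax^i,bx^j\})-\{\phi(ax^i),bx^j\}-\{ax^i,\phi(bx^j)\}
\]
using \eqref{eq:monomials}. The $x^{i+j}$ coefficient of this expression is
\[
\big(\phi(\{a,b\}_A)-\{\phi(a),b\}_A-\{a,\phi(b)\}_A\big)
+ j\big(\phi(\alpha(a)b)-\alpha(\phi(a))b-\alpha(a)\phi(b)\big)
- i\big(\phi(a\alpha(b))-a\alpha(\phi(b))-\phi(a)\alpha(b)\big).
\]
Because $\phi$ is a derivation commuting with $\alpha$, the last two groups vanish. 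The $x^{i+j-1}$ coefficient has the same shape with $\delta$ in place of $\alpha$, so it vanishes because $\phi$ commutes with $\delta$. Hence the whole expression reduces to the $A$-level defect
\[
\big(\phi(\{a,b\}_A)-\{\phi(a),b\}_A-\{a,\phi(b)\}_A\big)x^{i+j}.
\]
For $\phi=\beta$ this defect is zero, since $\beta$ is a Poisson derivation of $A$. So $\beta$ is a Poisson derivation of $B$.

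\textbf{Mixed identity.} For $\phi=\epsilon$, identity \eqref{eq:mixedidentity} on $A$ turns the defect into $(\epsilon(a)\beta(b)-\beta(a)\epsilon(b))x^{i+j}$. This equals $\epsilon(ax^i)\beta(bx^j)-\beta(ax^i)\epsilon(bx^j)$, which is exactly \eqref{eq:mixedidentity} for $(\beta,\epsilon)$ on $B$. Thus $(\beta,\epsilon)$ is a Poisson-pair on $B$. Oh's theorem, as recalled in the introduction, then gives a unique Poisson bracket on $B[y]$ with $\{f,y\}=\beta(f)y+\epsilon(f)$ for $f\in B$. Specializing to $f=a\in A$ gives $\{a,y\}=\beta(a)y+\epsilon(a)$. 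Specializing to $f=x$ gives $\{x,y\}=\beta(x)y+\epsilon(x)=0$, since $\beta(1\cdot x)=\beta(1)x=0$ and likewise for $\epsilon$.

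The main obstacle is the bookkeeping in the bracket-compatibility computation. In particular, the edge convention in \eqref{eq:monomials} for $i=j=0$ must be handled, but there both sides reduce to the $A$-level identities. One must also confirm that the commutation hypotheses are exactly what kills the cross terms: $\phi\alpha=\alpha\phi$ and $\phi\delta=\delta\phi$ are each used once.
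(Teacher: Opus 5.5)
Your proof is correct and follows the same route as the paper. You verify the derivation, Poisson-derivation and mixed identities on monomials $ax^i, bx^j$ via \eqref{eq:monomials}, where the commutation hypotheses cancel the $\alpha$- and $\delta$-cross terms and leave only the $A$-level defect; you then invoke Oh's result together with $\beta(x)=\epsilon(x)=0$. Your explicit coefficient bookkeeping simply spells out the ``direct calculation'' that the paper leaves to the reader.
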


\begin{proof}
It is clear that the extended maps $\beta$ and $\epsilon$
are derivations of the ordinary polynomial algebra $A[x]$.
We show that they form a Poisson-pair on $B$.
Let $a,b\in A$ and $i,j\in\N$. By \eqref{eq:monomials},
\[
\{ax^i,bx^j\}
=
\bigl(\{a,b\}_A+j\alpha(a)b-ia\alpha(b)\bigr)x^{i+j}
+
\bigl(j\delta(a)b-ia\delta(b)\bigr)x^{i+j-1}.
\]
Using that $\beta$ is a Poisson derivation of $A$ and that
$\beta$ commutes with both $\alpha$ and $\delta$, a direct
calculation gives
\[
\beta(\{ax^i,bx^j\})
=
\{\beta(ax^i),bx^j\}
+
\{ax^i,\beta(bx^j)\}.
\]
Thus $\beta$ is a Poisson derivation of $B$.

Similarly, using that $(\beta,\epsilon)$ is a Poisson-pair
on $A$ and that $\epsilon$ commutes with both $\alpha$ and
$\delta$, the same formula gives
\[
\begin{aligned}
\epsilon(\{ax^i,bx^j\})
={}&
\{\epsilon(ax^i),bx^j\}
+
\{ax^i,\epsilon(bx^j)\}  \\
&-
\beta(ax^i)\epsilon(bx^j)
+
\epsilon(ax^i)\beta(bx^j).
\end{aligned}
\]
Hence $(\beta,\epsilon)$ is a Poisson-pair on $B$.

Finally, since $\beta(x)=0$ and $\epsilon(x)=0$, the defining
relation in $B[y;\beta,\epsilon]_\P$ gives
$\{x,y\}=\beta(x)y+\epsilon(x)=0$.
The relation
$\{a,y\}=\beta(a)y+\epsilon(a)$
for $a\in A$ follows directly from the definition of the
Poisson Ore extension.
\end{proof}

\begin{prop}\label{prop:iterated-Poisson-Ore-construction}
Let $A$ be a Poisson algebra, and let
$(\alpha_1,\delta_1),\ldots,(\alpha_n,\delta_n)$
be Poisson-pairs on $A$. Suppose that, whenever $i\neq j$,
each of the maps $\alpha_i$ and $\delta_i$ commutes with
each of the maps $\alpha_j$ and $\delta_j$.
Then the polynomial algebra
$A[x_1,\ldots,x_n]$
has a unique Poisson algebra structure extending the Poisson
bracket on $A$ and satisfying
$\{a,x_i\}=\alpha_i(a)x_i+\delta_i(a)$
for all $a\in A$ and $i\in\{1,\ldots,n\}$, and
$\{x_i,x_j\}=0$
for all $i,j\in\{1,\ldots,n\}$. We denote this Poisson
algebra by
$A[x_1;\alpha_1,\delta_1]_\P
\cdots
[x_n;\alpha_n,\delta_n]_\P$.
Moreover, its Poisson bracket extends uniquely to the
Laurent polynomial algebra
$A[x_1^{\pm},\ldots,x_n^{\pm}]$.
The resulting Poisson algebra
is denoted by
$A[x_1^{\pm};\alpha_1,\delta_1]_\P
\cdots
[x_n^{\pm};\alpha_n,\delta_n]_\P$.
\end{prop}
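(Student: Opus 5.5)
Existence will be proved by induction on $n$ using Lemma~\ref{lem:extend-commuting-pair}, applied repeatedly. Put $B_0\colonequals A$ and $B_k\colonequals B_{k-1}[x_k;\alpha_k,\delta_k]_\P$, where $\alpha_k,\delta_k$ are extended to $B_{k-1}$ coefficientwise, i.e.\ $\alpha_k(a x_1^{i_1}\cdots x_{k-1}^{i_{k-1}})\colonequals \alpha_k(a)x_1^{i_1}\cdots x_{k-1}^{i_{k-1}}$, and similarly for $\delta_k$. For this to make sense at every stage, I will strengthen the inductive hypothesis. At stage $k-1$ I will carry along: the coefficientwise extensions of $(\alpha_j,\delta_j)$ for $j\geq k$ are Poisson-pairs on $B_{k-1}$, and, for distinct $j,l\geq k$, these extensions still pairwise commute. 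The commutation is immediate, because coefficientwise extension preserves composition: $\alpha_j\alpha_l$ extended equals the extended $\alpha_j$ composed with the extended $\alpha_l$. For the Poisson-pair property, I apply Lemma~\ref{lem:extend-commuting-pair} with $(\alpha,\delta)=(\alpha_{k-1},\delta_{k-1})$ and $(\beta,\epsilon)=(\alpha_j,\delta_j)$ on $B_{k-2}$. Its hypotheses hold by the inductive commutation, so $(\alpha_j,\delta_j)$ extends to a Poisson-pair on $B_{k-1}$.

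The lemma also gives $\{x_{k-1},x_k\}=0$ and $\{a,x_k\}=\alpha_k(a)x_k+\delta_k(a)$ for $a\in B_{k-2}$. In particular this holds for $a\in A$. For $i<k-1$ we have $\alpha_k(x_i)=\delta_k(x_i)=0$ in $B_{k-1}$, since the extension acts only on coefficients. Hence $\{x_i,x_k\}=\alpha_k(x_i)x_k+\delta_k(x_i)=0$. So $B_n$ is a Poisson structure on $A[x_1,\ldots,x_n]$ with the required relations.

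Uniqueness follows because $A[x_1,\ldots,x_n]$ is generated as a commutative algebra by $A$ and $x_1,\ldots,x_n$. By Leibniz's rule and antisymmetry, a Poisson bracket is determined by its values on pairs of generators. The brackets on $A\times A$, on $A\times\{x_i\}$, and on $\{x_i\}\times\{x_j\}$ are all prescribed, so any two such brackets agree on all monomials and hence everywhere by bilinearity.

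For the Laurent statement, I apply Remark~\ref{rem:localise} with $C\colonequals B_n$ and $S$ the multiplicative set of monomials $x_1^{i_1}\cdots x_n^{i_n}$. These are regular in the polynomial ring, and $S^{-1}C=A[x_1^{\pm},\ldots,x_n^{\pm}]$. The remark gives existence and uniqueness of the extended bracket.

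The main obstacle is the bookkeeping in the inductive step. One must check that after adjoining $x_{k-1}$, the later pairs still satisfy the hypotheses of Lemma~\ref{lem:extend-commuting-pair}, which are commutation with the newly extended maps on $B_{k-2}$. The fact that coefficientwise extension is compatible with composition is the key point to verify. One should also check that the ordering of the construction does not affect the result, but uniqueness makes this automatic.
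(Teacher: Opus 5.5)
Your proposal is correct and follows the paper's proof. The paper also builds the structure by induction using repeated applications of Lemma~\ref{lem:extend-commuting-pair}, obtains uniqueness because a Poisson bracket is a biderivation and hence determined by its values on generators, and gets the Laurent case from Remark~\ref{rem:localise}. Your strengthened inductive hypothesis carries the coefficientwise extensions of the later pairs along with their pairwise commutation, which survives because coefficientwise extension respects composition. This makes explicit the bookkeeping that the paper compresses into ``by repeated application''.
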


\begin{proof}
We construct the polynomial Poisson algebra by induction on
$n$. The case $n=1$ is precisely the construction of a
Poisson Ore extension.

Assume that the result holds for
$n-1$ variables, and put
\[
B_{n-1}
\colonequals
A[x_1;\alpha_1,\delta_1]_\P
\cdots
[x_{n-1};\alpha_{n-1},\delta_{n-1}]_\P.
\]
By repeated application of
Lemma~\ref{lem:extend-commuting-pair}, the maps
$\alpha_n$ and $\delta_n$ extend to a Poisson-pair on
$B_{n-1}$ by acting on the coefficients in $A$ and by
satisfying
$\alpha_n(x_i)=0$,
$\delta_n(x_i)=0$
for $i \in \{1,\ldots,n-1\}$. 
Hence the Poisson Ore extension
$B_n\colonequals B_{n-1}[x_n;\alpha_n,\delta_n]_\P$
is well defined. By construction,
$\{a,x_n\}=\alpha_n(a)x_n+\delta_n(a)$
for all $a\in A$, and
$\{x_i,x_n\}=0$ for $i \in \{1,\ldots,n-1\}$. Uniqueness of the Poisson algebra structure follows because a Poisson bracket is a biderivation and is therefore determined by its values on $A$ and on the variables $x_1,\ldots,x_n$.
This proves the polynomial case.

The multiplicative set $\{x_1^{m_1}\cdots x_n^{m_n}\mid m_1,\ldots,m_n\in\N\}$
consists of regular elements of the polynomial algebra $A[x_1,\ldots,x_n]$.
Hence, by Remark~\ref{rem:localise},
the Poisson bracket on $A[x_1,\ldots,x_n]$ extends uniquely to a Poisson bracket on the localization
$A[x_1^\pm,\ldots,x_n^\pm]$.\qedhere
\end{proof}

\begin{thm}\label{thm:Iter-Laurent}
Let $A$ be a Poisson algebra that is Poisson--Noetherian, and suppose that
$(\alpha_1,\delta_1),\ldots,(\alpha_n,\delta_n)$
are Poisson-pairs on $A$. Suppose that, whenever $i\neq j$,
each of the maps $\alpha_i$ and $\delta_i$ commutes with
each of the maps $\alpha_j$ and $\delta_j$.
Then \[ A[x_1;\alpha_1,\delta_1]_\P
\cdots
[x_n;\alpha_n,\delta_n]_\P \]
and
\[ A[x_1^{\pm};\alpha_1,\delta_1]_\P
\cdots
[x_n^{\pm};\alpha_n,\delta_n]_\P \]
are Poisson--Noetherian.
\end{thm}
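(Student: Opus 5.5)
We argue by induction on $n$, using Theorem~\ref{thm:main} for the polynomial case and Lemma~\ref{lem:localization-Noetherian} for the Laurent case. For $n=1$ the statement is exactly Theorem~\ref{thm:main}.

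For the inductive step, put
\[
B_{n-1}\colonequals A[x_1;\alpha_1,\delta_1]_\P\cdots[x_{n-1};\alpha_{n-1},\delta_{n-1}]_\P .
\]
By the induction hypothesis, $B_{n-1}$ is Poisson--Noetherian. The proof of Proposition~\ref{prop:iterated-Poisson-Ore-construction} shows two things:
\begin{itemize}
\item $\alpha_n$ and $\delta_n$, extended coefficientwise, form a Poisson-pair on $B_{n-1}$;
\item the iterated algebra is $B_{n-1}[x_n;\alpha_n,\delta_n]_\P$, as Poisson algebras.
\end{itemize}
This extension is obtained by repeated application of Lemma~\ref{lem:extend-commuting-pair}. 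At each stage, the extended maps $\alpha_n,\delta_n$ on $A[x_1,\ldots,x_k]$ must still commute with $\alpha_{k+1},\delta_{k+1}$ (also extended coefficientwise), and they do, since all these maps act only on coefficients. Applying Theorem~\ref{thm:main} to the Poisson--Noetherian algebra $B_{n-1}$ with the pair $(\alpha_n,\delta_n)$ then shows that $B_n$ is Poisson--Noetherian. This completes the induction for the polynomial case.

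For the Laurent case, we do not iterate. By Proposition~\ref{prop:iterated-Poisson-Ore-construction}, $A[x_1^{\pm};\alpha_1,\delta_1]_\P\cdots[x_n^{\pm};\alpha_n,\delta_n]_\P$ is the localization of $B_n$ at the multiplicative set
\[
S=\{x_1^{m_1}\cdots x_n^{m_n}\mid m_1,\ldots,m_n\in\N\}.
\]
These monomials are regular in the ordinary polynomial algebra: multiplying a nonzero polynomial by a monomial shifts its coefficients injectively. The bracket on the localization is the unique extension from Remark~\ref{rem:localise}. Hence Lemma~\ref{lem:localization-Noetherian}, applied to $C=B_n$ and this $S$, gives that the iterated Laurent extension is Poisson--Noetherian.

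The main obstacle is the identification in the inductive step: the Poisson structure produced by Proposition~\ref{prop:iterated-Poisson-Ore-construction} must genuinely coincide with a single Poisson Ore extension of $B_{n-1}$. The construction there is literally $B_{n-1}[x_n;\alpha_n,\delta_n]_\P$, and uniqueness of the bracket (it is a biderivation determined on $A$ and the variables) rules out any discrepancy. So this step amounts to checking that the hypotheses of Lemma~\ref{lem:extend-commuting-pair} persist after each extension, which we will verify by the coefficientwise argument above.
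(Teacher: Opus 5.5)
Your proposal is correct and follows the paper's proof essentially verbatim. The polynomial case goes by induction on $n$ via Lemma~\ref{lem:extend-commuting-pair} and Theorem~\ref{thm:main}, and the Laurent case by a single localization at the monomial set $S$ using Lemma~\ref{lem:localization-Noetherian}; your explicit check that the extended maps still commute at each stage is a detail the paper leaves implicit in ``repeated application.''
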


\begin{proof}
The Poisson Ore extension case follows by induction on $n$ from
Theorem~\ref{thm:main}. 
The case $n=1$ is exactly
Theorem~\ref{thm:main}. If the result holds for $n-1$
variables, then
$B_{n-1}
\colonequals
A[x_1;\alpha_1,\delta_1]_\P
\cdots
[x_{n-1};\alpha_{n-1},\delta_{n-1}]_\P$
is Poisson--Noetherian. By
Lemma~\ref{lem:extend-commuting-pair}, the pair
$(\alpha_n,\delta_n)$ extends to a Poisson-pair on
$B_{n-1}$. Hence Theorem~\ref{thm:main} shows that
$B_{n-1}[x_n;\alpha_n,\delta_n]_\P$
is Poisson--Noetherian.

Denote by $S$ the multiplicative set $\{x_1^{m_1}\cdots x_n^{m_n}\mid m_1,\ldots,m_n\in\N\}$. The Poisson algebra $A[x_1^\pm;\alpha_1,\delta_1]_\P\cdots [x_n^\pm;\alpha_n,\delta_n]_\P$ is then the localization of $A[x_1;\alpha_1,\delta_1]_\P\cdots [x_n;\alpha_n,\delta_n]_\P$ at $S$. It is Poisson--Noetherian by  Lemma~\ref{lem:localization-Noetherian}.
\end{proof}

We now consider some examples of
Poisson Ore extensions.
Recall that a Poisson algebra 
$A$ is called \emph{Poisson-simple} 
if its only Poisson ideals are 
$\{0\}$ and $A$.
In the first example, 
we show that there exist non-Noetherian
algebras that are Poisson-simple and 
hence Poisson--Noetherian.

\begin{ex}\label{ex:example-Poisson-simplicity}
Let $\F$ be a field of characteristic zero and put
\[
A \colonequals \F[x_1,x_2,x_3,\ldots].
\]
Then $A$ is not Noetherian. Indeed, for each $n\geq 1$,
let $I_n$ be the ideal of $A$ generated by
$x_1,\ldots,x_n$. Then
$I_1 \subsetneq I_2 \subsetneq I_3 \subsetneq \cdots$.
For $f,g\in A$, define
\[
\{f,g\}_A
\colonequals
\sum_{i = 1}^{\infty}
\left(
\frac{\partial f}{\partial x_i}
\frac{\partial g}{\partial x_{i+1}}
-
\frac{\partial f}{\partial x_{i+1}}
\frac{\partial g}{\partial x_i}
\right).
\]
This sum is finite, since each 
polynomial in $A$ involves
only finitely many variables. 
It is easy to check that 
$\{\cdot,\cdot\}_A$ defines a 
Poisson bracket on $A$.

We claim that $A$ is Poisson-simple. For $p\in A$, define
$d(p)\in \N$ as follows. 
If $p\in \F$, set $d(p)=0$. 
If $p\notin \F$, let $d(p)$ be the least 
positive integer such that
$p \in \F[x_1,\ldots,x_{d(p)}]$.
Let $I$ be a nonzero Poisson ideal 
of $A$. Choose
$p\in I\setminus\{0\}$ such that 
$d(p)$ is minimal. We show
that $d(p)=0$. Seeking a contradiction,
suppose that $d(p)>0$, and
write $d=d(p)$. Let $r \colonequals \deg_{x_d}(p)$.
Then $r\geq 1$. Moreover,
$\{x_{d+1},p\}_A
= -\frac{\partial p}{\partial x_d}$.
Since $I$ is a Poisson ideal, repeated application of the
Hamiltonian derivation 
$\{ x_{d+1},\cdot \}_A$ gives
\[
\{x_{d+1},\{x_{d+1},\cdots\{x_{d+1},p\}_A\cdots\}_A\}_A
=
(-1)^r \frac{\partial^r p}{\partial x_d^r}
\in I,
\]
where $x_{d+1}$ occurs $r$ times. Since $\F$ has characteristic zero,
this element is nonzero. It belongs to
$\F[x_1,\ldots,x_{d-1}]$, and hence has $d$-value strictly smaller than $d(p)$. This contradicts the choice of $p$.
Therefore $d(p)=0$, so $p\in \F\cap I$ is a nonzero scalar.
Thus $1\in I$, and hence $I=A$. 
\end{ex}

Next, we use Example~\ref{ex:example-Poisson-simplicity}
to construct a Poisson Ore extension which is
Poisson--Noetherian but not Noetherian as an ordinary
algebra. 

\begin{ex}\label{ex:Poisson-Noetherian}
Let $A$ be the Poisson algebra from
Example~\ref{ex:example-Poisson-simplicity}. Set
$\alpha \colonequals \frac{\partial}{\partial x_1}$.
We first show that $\alpha$ is a Poisson derivation of $A$.
Since $A$ is generated as an algebra by the variables
$x_1,x_2,x_3,\ldots$, it is enough to check the identity
\[
\alpha(\{x_i,x_j\}_A)
=
\{\alpha(x_i),x_j\}_A+\{x_i,\alpha(x_j)\}_A
\]
for all $i,j\geq 1$. But each bracket $\{x_i,x_j\}_A$ is
a scalar, and each $\alpha(x_i)$ is also a scalar. Hence
both sides are zero, and $\alpha$ is a Poisson derivation.
We also note that $\alpha$ is not Hamiltonian. Suppose, to
the contrary, that there exists $a\in A$ such that $\alpha=\{a, \cdot \}_A$.
Since $a$ involves only finitely many variables, and since
$\alpha\neq 0$, there is a largest integer $N$ such that
$\frac{\partial a}{\partial x_N}\neq 0$.
Then
\[
0=\alpha(x_{N+1})
=\{a,x_{N+1}\}_A
=
\frac{\partial a}{\partial x_N}
-
\frac{\partial a}{\partial x_{N+2}}
=
\frac{\partial a}{\partial x_N},
\]
which is a contradiction. Thus $\alpha$ is not Hamiltonian.

Consider the Poisson Ore extension
$B\colonequals A[y;\alpha,0]_\P$.
Thus $B=A[y]$ as an ordinary algebra, 
and its Poisson
bracket is defined by
$\{a,y\}=\alpha(a)y$
for all $a\in A$.
By Theorem~\ref{thm:main}, 
the Poisson algebra $B$ is
Poisson--Noetherian, since $A$ is Poisson-simple and hence
Poisson--Noetherian. However, $B$ is not Noetherian as an ordinary algebra.
Indeed, $B/(y)\cong A$ as ordinary algebras, and $A$ is not
Noetherian. Hence $B$ is not Noetherian.

Finally, $B$ is not Poisson-simple. For each $n\geq 1$, the
ordinary ideal $y^nB$ is a proper Poisson ideal of $B$.
Indeed, it is clearly a proper ideal of $B$, and for
$a,b\in A$ and $i,j\in \N$, the formula
\[
\{ay^i,by^j\}
=
\bigl(\{a,b\}_A+j\alpha(a)b-ia\alpha(b)\bigr)y^{i+j}
\]
shows that the bracket of an arbitrary element of $B$ with
an element of $y^nB$ again belongs to $y^nB$. Hence
$y^nB$ is a proper Poisson ideal of $B$. In particular,
$B$ is not Poisson-simple.
\end{ex}

The preceding example used a 
Poisson-pair of the form
$(\alpha,0)$. The next example 
considers the opposite
case, namely Poisson-pairs of the 
form $(0,\delta)$.

\begin{ex}\label{ex:PoissonNoetherianNotNoetherian}
Let $A$ be the Poisson algebra from
Example~\ref{ex:example-Poisson-simplicity}, and let
$\delta$ be any Poisson derivation of $A$. Then
$(0,\delta)$ is a Poisson-pair on $A$. Indeed, since
$\delta$ is a Poisson derivation, we have
$\delta(\{a,b\}_A) =
\{\delta(a),b\}_A+\{a,\delta(b)\}_A$
for all $a,b\in A$, which is precisely
\eqref{eq:mixedidentity} with $\alpha=0$.
Hence Theorem~\ref{thm:main} implies that
$A[x;0,\delta]_\P$
is Poisson--Noetherian. 
As an ordinary algebra,
$A[x;0,\delta]_\P$ is equal to the polynomial algebra
$A[x]$. In particular, it is not Noetherian, since
$A[x;0,\delta]_\P/(x)\cong A$
as ordinary algebras, and $A$ is not Noetherian.
\end{ex}

In our final example, we consider an
iterated version of
Example~\ref{ex:Poisson-Noetherian}.

\begin{ex}\label{ex:IteratedNotNoetherian}
Let $A$ be the Poisson algebra from
Example~\ref{ex:example-Poisson-simplicity}. For a fixed
$n\geq 1$, set
$\alpha_i \colonequals \frac{\partial}{\partial x_i}$ for 
$i\in\{1,\ldots,n\}$.
As in Example~\ref{ex:Poisson-Noetherian}, each $\alpha_i$
is a Poisson derivation of $A$. Moreover, the derivations
$\alpha_1,\ldots,\alpha_n$ commute pairwise. Hence
$(\alpha_1,0),\ldots,(\alpha_n,0)$
are commuting Poisson-pairs on $A$. Therefore, by
Theorem~\ref{thm:Iter-Laurent}, $A[y_1;\alpha_1,0]_\P
\cdots
[y_n;\alpha_n,0]_\P$
and 
$A[y_1^{\pm};\alpha_1,0]_\P
\cdots
[y_n^{\pm};\alpha_n,0]_\P$
are Poisson--Noetherian.
As ordinary algebras, these are polynomial and Laurent
polynomial algebras over $A$, respectively. They are not
Noetherian. The polynomial case follows since quotienting by
$(y_1,\ldots,y_n)$ gives $A$
(which is not Noetherian), 
and the Laurent case follows
because the strictly ascending chain
$(x_1)A \subsetneq (x_1,x_2)A \subsetneq (x_1,x_2,x_3)A
\subsetneq \cdots$
extends to a strictly ascending chain of ideals in the Laurent
polynomial algebra.
\end{ex}

\section*{Acknowledgments}
The authors thank Samuel A. Lopes for a preliminary discussion and the anonymous referee for helpful comments and suggestions.

\end{document}